\providecommand\@dotsep{5}
\renewcommand{\listoftodos}[1][\@todonotes@todolistname]{%
	\@starttoc{tdo}{#1}}
\DeclareMathOperator{\rk}{rk}
\DeclareMathOperator{\Gal}{Gal}
\newtheorem{theorem}{Theorem}[section]
\newtheorem*{theorem*}{Theorem}
\newtheorem{conjecture}[theorem]{Conjecture}
\newtheorem{proposition}[theorem]{Proposition}
\newtheorem*{proposition*}{Proposition}
\newtheorem{corollary}[theorem]{Corollary}
\newtheorem*{corollary*}{Corollary}
\theoremstyle{definition}
\newtheorem{remark}[theorem]{Remark}
\newtheorem{example}[theorem]{Example}
\numberwithin{equation}{section}
\newcommand{\Q}{\mathbb{Q}}
\newcommand{\Z}{\mathbb{Z}}
\newcommand{\PP}{\mathbb{P}}
\newcommand{\Qbar}{\overline{\Q}}
\newcommand{\fp}{\mathfrak{p}}
\newcommand{\OO}{\mathcal{O}}
\newcommand{\tor}{\mathrm{tors}}
\newcommand{\diamondop}[1]{\langle #1 \rangle} % diamond operator
\newcommand{\githubbare}[1]{\href{https://github.com/F-Najman/q100/blob/main/#1}{\path{#1}}}
\title{Quadratic points on modular curves $X_0(N)$ for $N\leq 100$}
\author[Najman]{Filip Najman}
\address{Filip Najman, University of Zagreb, Faculty of Science, Department of Mathematics, Bijeni\v{c}ka Cesta 30, 10000 Zagreb, Croatia}
\email{\url{fnajman@math.hr}}
\author[Novak]{Ivan Novak}
\address{Ivan Novak, University of Zagreb, Faculty of Science, Department of Mathematics, Bijeni\v{c}ka Cesta 30, 10000 Zagreb, Croatia}
\email{\url{ivan.novak@math.hr}}
\date{}
\keywords{Modular curves, torsion, elliptic curves}
\begin{document}

\begin{abstract}
    We determine the quadratic points on the modular curves $X_0(N)$ for $N\leq 100$ for which this has not been previously done, namely the cases
    $$N\in\{66,70,78,82,84,86,87,88,90,96,99\}.$$
    We accomplish this by improving on the ``going down method," which uses the fact that we have a moduli description of all the (infinitely many) quadratic points on $X_0(n)$ for some divisor $n$ of $N$.
\end{abstract}
\thanks{The authors were financed by the Croatian
Science Foundation under the project no. IP-2022-10-5008 and by the  project “Implementation of cutting-edge research and its application as part of the Scientific Center of Excellence for Quantum and Complex Systems, and Representations of Lie Algebras“, PK.1.1.02, European Union, European Regional Development Fund.}

\maketitle

\section{Introduction}
A fundamental problem in arithmetic geometry, with many consequences in the theory of elliptic curves and with many applications to Diophantine equations, is to understand the low degree points on modular curves. Perhaps the most important and well-known families of modular curves are $X_0(N)$ and $X_1(N)$. There has been notably more success in studying $X_1(N)$. In particular, the set of all $N$ for which $X_1(N)$ has degree $d$ non-cuspidal points is known for $d\leq 4$; see \cite{mazur77} for the proof for $d=1$, \cite{KM88,kamienny92} for $d=2$, \cite{Deg3Class} for $d=3$ and \cite{DerickxNajmanDeg4} for $d=4$.

On the other hand, the modular curves $X_0(N)$ turn out to be more difficult. The list of all the $N$ with non-cuspidal degree $d$ points are known only in degree $d=1$, due to Mazur \cite{mazur78} who solved the problem for prime $N$, and Kenku (see \cite{kenku1981} and the references therein) who completed the classification for all $N$.  We note that while the the list of all $N$ with a non-cuspidal point on $X_0(N)$ is not known for the set of all quadratic fields, it is known for many \textit{fixed} quadratic fields (see \cite{BanwaitNajmanPadurariu24}), assuming the Generalized Riemann Hypothesis.

In this paper we study quadratic points on $X_0(N)$. One major difficulty in studying quadratic points on $X_0(N)$ is that, as opposed to $X_1(N)$, the modular curve $X_0(N)$ has non-cuspidal quadratic points for infinitely many $N$, coming from elliptic curves with complex multiplication (CM). However, if one ignores the CM points, then it is expected that there should be only finitely many $N$ such that $X_0(N)$ has non-cuspidal non-CM points. The following conjecture is widely believed. 

\begin{conjecture}[Quadratic isogenies conjecture \cite{AKNOV24}] \label{conj:qic}
There exists an integer $C$ such that if $K$ is a quadratic field and $N>C$ is an integer, then any $P\in X_0(N)(K)$ is either a cusp or a CM point. 
\end{conjecture}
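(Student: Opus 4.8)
The plan is to translate \Cref{conj:qic} into a uniform statement about isogenies and then attack the latter. A non-cuspidal point $P\in X_0(N)(K)$ with $K$ quadratic corresponds to an elliptic curve $E/K$ together with a cyclic $K$-rational $N$-isogeny (the field of moduli of the pair is a field of definition away from $j=0,1728$, up to a quadratic twist that affects neither the isogeny degree nor the CM property), and $P$ is a CM point precisely when $E$ has complex multiplication. Thus the conjecture is equivalent to the assertion that there is a uniform $C$ such that no non-CM elliptic curve over any quadratic field admits a cyclic isogeny of degree $N>C$. The reason CM points must be excluded is exactly that a CM curve with rational $j$-invariant admits cyclic isogenies of arbitrarily large degree, producing non-cuspidal quadratic points on $X_0(N)$ for infinitely many $N$; these form a genuine, well-understood family that cannot be bounded away.

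\textbf{Finiteness at each fixed level.} First I would dispatch the version ``for each individual $N$.'' By Abramovich's theorem the $\C$-gonality of $X_0(N)$ grows at least linearly in the index $[\PSL_2(\Z):\overline{\Gamma_0(N)}]$, so $\gamma_\Q(X_0(N))\geq\gamma_\C(X_0(N))\to\infty$. A curve carrying a degree-$2$ map to $\PP^1$ has gonality $2$, and a bielliptic curve $X\to E$ admits the degree-$4$ map $X\to E\to\PP^1$, hence gonality at most $4$. Therefore for $N$ large $X_0(N)$ is neither hyperelliptic nor bielliptic, and by the structure theorem for quadratic points (Harris--Silverman, building on Faltings) such a curve of genus $\geq2$ has only finitely many quadratic points. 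Combined with the finite classifications of hyperelliptic (Ogg) and bielliptic (Bars) levels, this shows each $X_0(N)$ has finitely many quadratic points. However, this is emphatically \emph{not} enough: it does not preclude a sequence $N_k\to\infty$ with each $X_0(N_k)$ carrying a non-CM non-cuspidal quadratic point.

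\textbf{The uniform bound.} The heart of the conjecture is thus the uniform isogeny bound, which I would approach by decomposing a cyclic $N$-isogeny into prime-power constituents as in Kenku's treatment over $\Q$. Writing $N=\prod_i \ell_i^{a_i}$, an elliptic curve $E/K$ with a cyclic $N$-isogeny yields a cyclic $\ell_i^{a_i}$-isogeny for each $i$, hence a $K$-point on each $X_0(\ell_i^{a_i})$ and reducibility of $\overline\rho_{E,\ell_i}$. Bounding $N$ then splits into three uniform sub-problems over quadratic fields: (i) bounding the primes $\ell$ for which some non-CM $E/K$ has a cyclic $\ell$-isogeny; (ii) bounding the exponents $a$ for cyclic $\ell^a$-isogenies; and (iii) bounding the number of distinct prime divisors of $N$, equivalently the number of primes $\ell$ for which $\overline\rho_{E,\ell}$ can be reducible. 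I expect (ii) and (iii) to be accessible by adapting the arguments over $\Q$, together with an open-image input bounding the count of reducible primes for a non-CM curve.

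\textbf{Main obstacle.} The decisive difficulty is sub-problem (i): a bound, uniform over \emph{all} quadratic fields simultaneously, on the prime degrees of isogenies of non-CM elliptic curves. This is the quadratic-field analogue of Mazur's theorem on rational isogenies of prime degree fused with Serre's uniformity conjecture, and is where the current technology falls short. The relevant tools are the analysis of isogeny characters and Momose's congruence conditions, Runge's method (Bilu--Parent) applied to $X_0(\ell)$ and $X_{\mathrm{split}}(\ell)$, and the computations underlying the fixed-field classifications of \cite{BanwaitNajmanPadurariu24}; these yield the bound for each individual quadratic field, and under GRH effective bounds, but uniformity in the field appears to require a genuinely new input. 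Granting such a uniform prime-isogeny bound, the full conjecture would then follow from the reduction above.
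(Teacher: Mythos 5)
The statement you are addressing is labelled as a conjecture in the paper, and the paper offers no proof of it: \Cref{conj:qic} is quoted from \cite{AKNOV24} as an open problem motivating the work, and the paper's actual results (quadratic points on the eleven curves $X_0(N)$ with $N\leq 100$) neither prove nor depend on it. So there is no proof in the paper to compare yours against, and what you have written is not a proof either --- it is a reduction of the conjecture to a statement you yourself identify as out of reach.

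The genuine gap is your sub-problem (i): a bound on prime isogeny degrees for non-CM elliptic curves that is uniform over \emph{all} quadratic fields simultaneously. This is not a technical lemma to be ``granted''; it is essentially the content of the conjecture restricted to prime level, and it is precisely where all known methods fail. The isogeny-character and Momose-type arguments, Runge's method \`a la Bilu--Parent, and the GRH-conditional computations of \cite{BanwaitNajmanPadurariu24} all produce bounds depending on the quadratic field $K$ (through its discriminant, class number, or a fixed choice of $K$), and no argument currently removes that dependence. Writing ``granting such a uniform prime-isogeny bound, the full conjecture would then follow'' therefore assumes the hardest and essentially defining part of what is to be proved; as a proof the argument is circular. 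Your preliminary reductions are themselves reasonable and standard --- the Kenku-style decomposition into prime-power constituents, the Abramovich gonality bound combined with Harris--Silverman to get finiteness at each fixed level, and the correct observation that fixed-level finiteness says nothing about uniformity --- but they only repackage the conjecture; they do not establish it. Sub-problems (ii) and (iii) also deserve more caution than ``accessible by adapting the arguments over $\Q$'': the prime-power and number-of-prime-divisors bounds over $\Q$ rely on Mazur's prime-degree theorem and on rank-zero quotient information at each level, neither of which is available uniformly over quadratic fields. In short: the statement is open, the paper does not claim otherwise, and your proposal should be presented as a strategy or reduction, not as a proof.
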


 We emphasize that the constant $C$ in this conjecture does not depend on the quadratic field $K$. \Cref{conj:qic} implies the conjecture of Elkies which states that the set of $N$ such that $X^+_0(N)(\Q)$ contains points that are neither CM nor cusps is finite (see \cite{Elkies2004}).

This is a topic that has received considerable attention recently. Bruin and Najman described all the quadratic points on hyperelliptic $X_0(N)$ such that the Jacobian $J_0(N)$ has rank $0$ over $\Q$ \cite{BruinNajman15}.  \"Ozman and Siksek then determined all the quadratic points on the non-hyperelliptic $X_0(N)$ with $J_0(N)$ having rank $0$ over $\Q$ and with genus $\leq 5$  \cite{OzmanSiksek19}. Box determined the quadratic points on all the $X_0(N)$ with  genus $\leq 5$ and with $J_0(N)$ having positive rank over $\Q$ \cite{Box19}. Najman and Vukorepa described all the quadratic points on the bielliptic $X_0(N)$ (for which this had not already been done in previous work) \cite{NajmanVukorepa23}. Adžaga, Keller, Michaud-Jacobs, Najman, \"Ozman and Vukorepa developed methods to compute quadratic points on a general fixed $X_0(N)$, and found all the quadratic points on all such curves of genus $\leq 8$, and of genus $\leq 10$ for $N$ prime (see \cite{AKNOV24}). 

We make progress on quadratic points on this topic by 
%in two (connected) directions. The first direction is that 
determining the quadratic points on all $X_0(N)$ for $N\leq 100$, for which this has not been already done. There are $11$ such values of $N$; the genus of these $X_0(N)$ is $9\leq g(X_0(N))\leq 11.$ In all but one ($N=86$) of these cases the algorithms of \cite{AKNOV24} fail, in most cases because we are unable to determine $J_0(N)(\Q)_{\tor}$. In all of the aforementioned $11$ cases, $N$ is of the form $N=nd$, where $n$ is such that points have $X_0(n)$ already been determined. This allows us to approach the problem using the ``going-down" approach (as referred to in \cite{AKNOV24}), which was developed in \cite[Section 2]{NajmanVukorepa23}, and reduces the problem to finding rational points on some quotients of modular curves. This approach has the advantage of being less computational compared to the Atkin-Lehner sieve from \cite{AKNOV24}, in the sense that it usually requires very little computation. We note that some of our results are used in the upcoming work of Derickx and Najman \cite{DN_sporadic} which determines all the $N$ such that $X_0(N)$ has a sporadic point. 

We introduce conventions and notation in \Cref{sec:notation}. The going-down method, as stated in \cite[Section 2]{NajmanVukorepa23} cannot be applied to $N=96$, so we generalize the method; see \Cref{sec:going-down}. We describe the calculations for all the $X_0(N)$ in \Cref{sec:results}, and finally list the results in the tables in \Cref{sec:tables}.

All the code used in the paper can be found at 

\begin{center} \url{https://github.com/F-Najman/q100}\end{center}

\section*{Acknowledgements}
We thank Nikola Adžaga and Maarten Derickx for helpful discussions. 

\section{Conventions and notation}\label{sec:notation}

We will be using the conventions and notation from \cite[Section 2]{NajmanVukorepa23}, which we will essentially repeat here for the convenience of the reader. 

The modular curve $X_0(N)$ is the moduli space of pairs $(E,C)$, where $E$ is a generalized elliptic curve with a cyclic subgroup $C$. It is a coarse moduli space. 

The notion of $\Q$-curves can be defined over general number fields, but in this paper we will restrict to $\Q$-curves over quadratic fields, as this is exclusively what we will be working with. For the general theory, see \cite{CremonaNajmanQCurve,Elkies2004}. An elliptic curve $E$ over a quadratic field $K$ is called a $\Q$-curve if it is isogenous (over $\overline \Q$) to its Galois conjugate. Throughout the paper, when saying that curves are isogenous, without mentioning over which field, we will always mean over $\overline \Q$. If the cyclic isogeny $E\rightarrow E^\sigma$ is of degree $d$, we say that $E$ is a $\Q$-curve of degree $d$. By $C_m$ we will denote a cyclic subgroup of order $m$. We allow $\Q$-curves to have complex multiplication. In particular, all elliptic curves with CM are $\Q$-curves.

Let $n$ be a positive integer and factor $n=dm$ with $(d,m)=1$. Let $w_d$ be the Atkin-Lehner involution sending a point $x\in Y_0(n)(L)$, where $x$ corresponds to $(E,C_d,C_{m})$, to the point $w_d(x)$, corresponding to $(E/C_d,E[d]/C_d,(C_{m}+C_d)/C_d)$. Here, quotienting out by $C_d$ means mapping by the $d$-isogeny $\mu$ such that $\ker \mu=C_d$, i.e. $w_d(x)=(\mu(E), \mu(E[d]),\mu(C_m))$. Thus, non-cuspidal $\Q$-rational points on $X_0(n)/w_d$ correspond to pairs
$$\left \{(E, C_d, C_m), (E/C_d,E[d]/C_d,(C_{m}+C_d)/C_d) \right\}$$
which are $\Gal_\Q$-invariant, meaning that either the point $(E, C_d, C_m)$ is defined over $\Q$ or there exists a quadratic extension $K/\Q$ with $\sigma$ generating $\Gal(K/\Q)$ such that
\begin{equation}(E, C_d, C_m)^\sigma=(E/C_d,E[d]/C_d,(C_{m}+C_d)/C_d),\label{eq:qc}\end{equation}
implying that $E$ is a $\Q$-curve of degree $d$ with the additional property that $\mu(C_{m})=C_{m}^\sigma$. We will say that an elliptic curve $F$ \textit{corresponds} to a point on $X_0(n)/w_d$ if $j(F)=j(E)$ or $j(F)=j(\mu(E))$. In the case of $d=n$, the curve $X_0(n)/w_n$ is denoted by $X_0^+(n)$ and it simply parametrizes a pair consisting of a $\Q$-curve of degree $n$ together with its Galois conjugate (without any further conditions). All the fixed points of $w_d$ correspond to CM elliptic curves.

Let $X/\Q$ be a curve. We say a point $x\in X(\Qbar)$ is $\PP^1$-parametrized if $[\Q(x):\Q]=d$ and there exists a degree $d$ morphism $\varphi:X\rightarrow \PP^1$ over $\Q$ of degree $d$ such that $\varphi(x)\in \PP^1(\Q)$. A point $x\in X(\Qbar)$ that is not $\PP^1$-parametrized is called $\PP^1$-isolated. Note that $\PP^1$-isolated points were previously sometimes referred to as \textit{exceptional} points in the literature; see for example \cite{BruinNajman15}. For hyperelliptic curves in the form $y^2=f(x)$, the quadratic points that are $\PP^1$-parametrized are those for which $x\in \Q$, while all the others are $\PP^1$-isolated.

\section{The ``going-down" method} \label{sec:going-down}

In this section we generalize the ``going-down" methods of \cite[Section 2]{NajmanVukorepa23}. The following proposition is a common generalization  of \cite[Propositions 2.2 and 2.3]{NajmanVukorepa23}.

%; see also \cite[Section 3.2]{AKNOV24}.

%A direct generalisation of 2.2. and 2.3. would be the following:

\begin{proposition} \label{prop:2.1}
    Let $m$ and $d$ be coprime positive integers. Let $E/\overline \Q$ be a non-CM $\Q$-curve of degree $d$ defined over a quadratic field $K$ having in addition an $m$-isogeny defined over $K$ with codomain $E_1$. Then for some divisor $k$ of $m$, $E$ corresponds to a rational point on the curve $X_0\left(\frac{m}{k}d\right)/w_d$, and $E_1$ corresponds to a rational point on the curve $X_0^+(k^2d)$.
\end{proposition}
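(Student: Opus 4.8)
The plan is to repackage the two given isogenies as subgroup data on $E^\sigma$ and read off the two claimed points. Write $\mu\colon E\to E^\sigma$ for the $\Qbar$-isogeny realizing the $\Q$-curve structure (cyclic of degree $d$, kernel $C_d$), and $\psi\colon E\to E_1$ for the given $m$-isogeny (with cyclic kernel $C_m$ of order $m$). Since $\psi$ is defined over $K$, the group $C_m$ is $\Gal(\Qbar/K)$-stable, so its conjugate $C_m^\sigma$ is a cyclic order-$m$ subgroup of $E^\sigma$. I would then introduce the two cyclic order-$m$ subgroups $D:=\mu(C_m)$ and $D':=C_m^\sigma$ of $E^\sigma$ (both of order $m$, using $\gcd(d,m)=1$ so that $\mu$ is injective on the $m$-torsion), and define $k$ by $\lvert D\cap D'\rvert=m/k$, equivalently $\lvert D+D'\rvert=mk$. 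As $D\cap D'$ is a subgroup of the cyclic group $D$, this $k$ is automatically a divisor of $m$.

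For the first point, I would let $C_{m/k}\subseteq C_m$ be the unique subgroup of order $m/k$ and check that $(E,C_d,C_{m/k})$ satisfies the defining relation \eqref{eq:qc}. The key observation is that in a cyclic group there is a unique subgroup of each order, so $\mu(C_{m/k})$ is the order-$(m/k)$ subgroup of $D$ while $C_{m/k}^\sigma$ is the order-$(m/k)$ subgroup of $D'$; since $D\cap D'$ has order exactly $m/k$, both coincide with $D\cap D'$. Hence $\mu(C_{m/k})=C_{m/k}^\sigma$, which is precisely the condition making $\{(E,C_d,C_{m/k}),\,w_d(E,C_d,C_{m/k})\}$ a $\Gal_\Q$-invariant pair; this produces a rational point on $X_0\!\left(\tfrac{m}{k}d\right)/w_d$ to which $E$ corresponds.

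For the second point I would exhibit a cyclic isogeny $E_1\to E_1^\sigma$ of degree $k^2d$. First, $\mu$ descends to $\bar\mu\colon E_1=E/C_m\to E^\sigma/\mu(C_m)=E^\sigma/D=:A$, a cyclic $d$-isogeny, while $E_1^\sigma=E^\sigma/C_m^\sigma=E^\sigma/D'$. It then suffices to produce a cyclic $k^2$-isogeny $A\to E_1^\sigma$ and compose, the composite being cyclic of degree $k^2d$ since the two kernels have coprime orders ($\gcd(d,k)=1$). To build the $k^2$-isogeny I would factor through $E^\sigma/(D+D')$: the projections $A\to E^\sigma/(D+D')$ and $E_1^\sigma\to E^\sigma/(D+D')$ are cyclic of degree $k$, so composing the first with the dual of the second gives $A\to E_1^\sigma$ of degree $k^2$.

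The main obstacle is verifying that this degree-$k^2$ map is genuinely cyclic (rather than containing a multiplication-by-$n$ factor), as this is exactly what pins down the level $k^2d$ of $X_0^+$. I would settle it one prime at a time: writing $m=\ell^e$, $k=\ell^f$ and passing to a lattice model $E^\sigma=\C/\Lambda$, one computes that the composite $E^\sigma/D\to E^\sigma/(D+D')\to E^\sigma/D'$ is multiplication by $\ell^f$ between the corresponding lattices, and the Smith normal form of that lattice inclusion shows the kernel is cyclic of order $\ell^{2f}$. Combining over the primes dividing $m$ yields a cyclic $k^2$-isogeny, and precomposing with $\bar\mu$ gives the cyclic $k^2d$-isogeny $E_1\to E_1^\sigma$. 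Thus $E_1$ is a $\Q$-curve of degree $k^2d$, non-CM because $E$ is, and so corresponds to a rational point on $X_0^+(k^2d)$; the non-CM hypothesis is what guarantees throughout that we obtain honest non-cuspidal points rather than fixed points of the involutions.
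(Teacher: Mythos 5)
Your proof is correct, and its first half (the choice of $k$ via $\lvert \mu(C_m)\cap C_m^\sigma\rvert = m/k$ and the verification that $\mu(C_{m/k})=C_{m/k}^\sigma$, both being forced to equal $D\cap D'$ by uniqueness of subgroups of a given order in a cyclic group) is essentially the paper's argument for the point on $X_0\left(\frac{m}{k}d\right)/w_d$, just phrased more directly in terms of the condition \eqref{eq:qc} rather than via an identity of composite isogenies. Where you genuinely diverge is in producing the cyclic $k^2d$-isogeny $E_1\to E_1^\sigma$: the paper factors it through the intermediate curve $E_2=E/kC_m$, which surjects onto both $E_1$ (degree $k$) and $E_1^\sigma$ (degree $dk$), and gets cyclicity from the trivial intersection of the two kernels inside $E_2$ (the standard ``independent isogenies'' criterion); you instead factor through the pushout $E^\sigma/(D+D')$, onto which both $E^\sigma/D$ and $E_1^\sigma=E^\sigma/D'$ surject with degree $k$, and certify cyclicity by a prime-by-prime lattice/Smith-normal-form computation. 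The two constructions are dual to one another -- they pass through the ``meet'' and the ``join'' of $E_1$ and $E_1^\sigma$ in the isogeny lattice, respectively, and yield the same isogeny up to sign -- so neither is more general, but the paper's kernel-intersection criterion is the slicker of the two, while your lattice computation is more explicit and self-contained. One small point present in both your write-up and the paper's, worth being aware of: the identification $(E,C_d)^\sigma = w_d(E,C_d)$ implicit in calling the pair $\Gal_\Q$-invariant also uses $C_d^\sigma=\ker\hat\mu$, which follows from $\mu^\sigma=\pm\hat\mu$ for a non-CM $\Q$-curve of degree $d$; this is where the non-CM hypothesis (uniqueness of cyclic isogenies up to sign) is really used.
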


\begin{proof}
In this proof for any non-CM elliptic curves $E,E'$, we denote by $E\rightarrow E'$ a cyclic isogeny; such an isogeny always exists by \cite[Lemma A.1]{CremonaNajmanQCurve}, and is unique up to sign.

Denote the isogeny $E \to E^\sigma$ by $\mu$, its kernel by $C_d$, and the kernel of the isogeny $E \to E_1$ by $C_m$. Let $\mu(C_m)\cap C_m^\sigma=k\mu(C_m)$, where $\sigma$ is the generator of $\Gal(K/\Q)$.

Let $E_2=E/kC_m$. We claim that $E_2$ and $E_2^\sigma$ are $d$-isogenous. Take a $d$-isogeny from $\phi:E_2\to E_3$ with $\ker(\phi)=\lambda(C_d)$, where $\lambda: E \to E_2$ is the isogeny with kernel $kC_m$. Then $\phi \circ \lambda: E \to E_3$ and $\lambda^\sigma\circ \mu$ have the same kernel $\diamondop{C_d, kC_m}$, so they are equal up to sign. Thus, $E_3$ and $E_2^\sigma$ are isomorphic, so $E_2^\sigma$ and $E_2$ are $d$-isogenous. We get the following diagram, where all the lines represent cyclic isogenies of the denoted degree.
\[
\begin{tikzpicture}
    \matrix (m) [matrix of math nodes, row sep=3em, column sep=6em, 
                 text height=1.5ex, text depth=0.25ex]
    {
        E & E^\sigma \\
        E_2 & E_2^\sigma \\
        E_1 & E_1^\sigma \\
    };
    
    % Horizontal arrows
    \path[-] (m-1-1) edge node[above] {$d$} (m-1-2);
    \path[-] (m-2-1) edge node[above] {$d$} (m-2-2);
    
    % Vertical arrows
    \path[-] (m-1-1) edge node[left] {$m/k$} (m-2-1);
    \path[-] (m-1-2) edge node[right] {$m/k$} (m-2-2);
    
    \path[-] (m-2-1) edge node[left] {$k$} (m-3-1);
    \path[-] (m-2-2) edge node[right] {$k$} (m-3-2);
    
\end{tikzpicture}
\]
In this diagram $E_2$ is the elliptic curve furthest along the isogeny $E\rightarrow E_1$ which is $d$-isogenous to its Galois conjugate. We see that $\{(E, C_d, kC_m), (E^\sigma, \mu(C_d), \mu(kC_m))\}$ corresponds to a rational point on $X_0\left(\frac{m}{k}d\right)/w_d$.

%We claim that the isogeny obtained by composing cyclic isogenies from $E_1$ to $E_2$, from $E_2$ to $E_2^\sigma$ and from $E_2^\sigma$ to $E_1^\sigma$ is cyclic. This is equivalent to showing that the isogenies from $E_2$ to $E_1^\sigma$ and $E_1$ are independent, i.e., that their kernels have trivial intersection. 

Let
$$\ker (E\rightarrow E_1^\sigma)=C_d+C_m',$$
where $C_m'$ is a cyclic subgroup of $E$ of order $m$, which by what we have proved satisfies $kC_m'=kC_m$ and $C_m[k]\cap C_m'[k]=\{0\}.$ 
It follows that 
$$E_2=E/kC_m, \quad \ker (E_2 \rightarrow E_1)=C_m/kC_m, \quad \ker (E_2 \rightarrow E_1^\sigma) =(C_d+C_m')/kC_m, $$
and hence 
$$\ker (E_2 \rightarrow E_1) \cap \ker (E_2 \rightarrow E_1^\sigma)=\{0\}.$$
\begin{comment}
Drugačije:

We claim that the isogeny obtained by composing cyclic isogenies from $E_1$ to $E_2$, from $E_2$ to $E_2^\sigma$ and from $E_2^\sigma$ to $E_1^\sigma$ is cyclic. This is equivalent to showing that the isogenies from $E_2$ to $E_1^\sigma$ and $E_1$ are independent, i.e., that their kernels have trivial intersection. 

Denote by $b$ the isogeny $E_2 \to E_1$, and by $f$ the isogeny $E\to E_1$. 

Suppose that for some $P \in E_2$ we have $b(P)=b^\sigma(\phi(P))=0$. Then $P \in \ker(b)=\lambda(C_m)$ so $P=\lambda(Q)$ for some $Q \in C_m$. 

Furthermore, we have $(b^\sigma \circ \phi \circ \lambda)(Q)=0$, so $(b^\sigma\circ \lambda^\sigma \circ \mu)(Q)=0$ or, in other words, $$(f^\sigma \circ \mu)(Q)=0.$$

This implies $\mu(Q)\in C_m^\sigma$ and since $Q  \in C_m$, we have $\mu(Q)\in \mu(C_m)\cap C_m^\sigma=kC_m^\sigma$, so $(\lambda^\sigma \circ \mu)(Q)=0=\phi(\lambda(Q))=\phi(P)$.

However, $\ker \phi \cap \ker b=\{0\}$ and it follows that $P=0$.
\end{comment}
Hence the isogenies $E_2 \rightarrow E_1$ and $E_2 \rightarrow E_1^\sigma$ are independent, and hence  $E_1$ is $k^2d$-isogenous to $E_1^\sigma$, so it corresponds to a point on $X_0^+(k^2d)$.
    
\end{proof}

\begin{remark}
    Note that the curves $E, E^\sigma, E_2$ and $E_2^\sigma$ (together with the appropriate subgroups) in the proof of \Cref{prop:2.1} correspond to a rational point on $X_0\left(\frac m k d\right)/\diamondop{w_d, w_{\frac m k}}$ which lifts to a rational point on $X_0\left(\frac m k d\right)/w_d$.
\end{remark}

We immediately get the following corollary, which we will later often use:
\begin{corollary}\label{cor:main}
    Let $n$ and $d$ be such that $X_0(n)$ is hyperelliptic and $w_d$ is the hyperelliptic involution on $X_0(n)$. Let $N=m_1m_2d$, where $m_2d=n$ and $m_1$ is coprime to $d$. Then any quadratic point on $X_0(N)$ that maps to a $\PP^1$-parametrized point on $X_0(n)$ induces a $\Q$-rational non-cuspidal point on both $X_0(k^2d)^+$ and $X_0(\frac {N} k)/w_d$ for some divisor $k$ of $m_2$.
\end{corollary}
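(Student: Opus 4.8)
The plan is to derive \Cref{cor:main} directly from \Cref{prop:2.1} by setting up the correct factorization and identifying the relevant isogeny data. First I would observe that a quadratic point $x$ on $X_0(N)$ corresponds to a pair $(E, C_N)$ where $E$ is an elliptic curve over a quadratic field $K$ and $C_N$ is a cyclic subgroup of order $N$; since $N = m_1 m_2 d$ with $(m_1, d) = 1$ and $m_2 d = n$, the cyclic subgroup $C_N$ decomposes (using coprimality of the parts) so that $E$ carries an $n$-isogeny and an $m_1$-isogeny whose kernels are compatible with this factorization. The image of $x$ under the natural map $X_0(N) \to X_0(n)$ is the pair $(E, C_n)$, and the hypothesis is that this image is $\PP^1$-parametrized on $X_0(n)$.

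Next I would invoke the characterization at the end of \Cref{sec:notation}: since $X_0(n)$ is hyperelliptic with hyperelliptic involution $w_d$, the hyperelliptic map $X_0(n) \to \PP^1$ is precisely the quotient by $w_d$, so the $\PP^1$-parametrized quadratic points on $X_0(n)$ are exactly those whose image in $X_0(n)/w_d = \PP^1$ is $\Q$-rational. Thus the image $(E, C_n)$ gives a rational point on $X_0(n)/w_d$, which (by the discussion around \eqref{eq:qc}) forces either $(E,C_n)$ to be defined over $\Q$ or $E$ to be a $\Q$-curve of degree $d$ satisfying the Atkin--Lehner matching condition. Assuming we are in the non-trivial (and genuinely quadratic) case, $E$ is a non-CM $\Q$-curve of degree $d$ over $K$. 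I would then set $m = m_1 m_2$, noting $E$ has an $m$-isogeny over $K$ with some codomain $E_1$, and apply \Cref{prop:2.1} verbatim: it yields a divisor $k \mid m$ such that $E_1$ corresponds to a rational non-cuspidal point on $X_0^+(k^2 d)$ and $E$ corresponds to a rational point on $X_0(\frac{m}{k}d)/w_d = X_0(\frac{N}{k})/w_d$, since $\frac{m}{k}d = \frac{m_1 m_2 d}{k} = \frac{N}{k}$.

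The main subtlety I anticipate is pinning down that the divisor $k$ can be taken to divide $m_2$ rather than merely the full $m = m_1 m_2$. The key point is that $k$ measures the overlap $\mu(C_m) \cap C_m^\sigma = k\,\mu(C_m)$ between the pushed-forward isogeny kernel and its conjugate; because the degree-$m_1$ part of the isogeny is already defined over $K$ with kernel disjoint in its prime support from $d$ and matched compatibly with the $\Q$-curve structure coming from $X_0(n)$, the discrepancy between $C_m$ and $C_m^\sigma$ can only occur in the $m_2$-part (the part living over $n = m_2 d$ where the $w_d$-matching actually constrains things). I would argue this by tracking the prime-by-prime decomposition of the overlap: on the primes dividing $m_1$ the isogeny and its conjugate agree (both being defined over $K$ and descending appropriately), so $k$ contributes nothing there and hence $k \mid m_2$. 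Finally I would dispose of the degenerate cases --- CM points and points of $X_0(N)$ actually defined over $\Q$ --- by noting that CM is excluded by hypothesis in \Cref{prop:2.1} and that a genuinely quadratic point mapping to a $\PP^1$-parametrized point still produces the asserted rational points on the quotients. This last bookkeeping step, rather than any deep input, is where I expect the real care to be needed.
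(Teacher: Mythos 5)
Your first two steps are exactly the paper's proof: the paper deduces from the hypothesis that $E$ is a $\Q$-curve of degree $d$ (citing \cite[Theorem 12]{BruinNajman15} rather than re-deriving it from the moduli interpretation of $X_0(n)/w_d$, but the content is the same) and then applies \Cref{prop:2.1} with $m=m_1m_2$. The problem is in your ``main subtlety'' paragraph, where the argument is backwards. The quantity $k$ is defined by $\mu(C_m)\cap C_m^\sigma=k\mu(C_m)$, and the constraint you have available is the one coming from the rational point on $X_0(n)/w_d$: by the discussion around \eqref{eq:qc}, that rational point says precisely that $(E,C_d,C_{m_2})^\sigma=w_d(E,C_d,C_{m_2})$, i.e.\ $\mu(C_{m_2})=C_{m_2}^\sigma$. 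So it is the \emph{$m_2$-part} of the kernel that is forced to match its conjugate under $\mu$, and the defect $k$ is therefore supported on the primes dividing $m_1=N/n$, about which the hypothesis says nothing. Your claim that ``on the primes dividing $m_1$ the isogeny and its conjugate agree (both being defined over $K$)'' conflates Galois-stability of $C_{m_1}$ over $K$ (which holds, since the isogeny is defined over $K$) with the Atkin--Lehner matching $\mu(C_{m_1})=C_{m_1}^\sigma$ (which is a genuinely different condition and need not hold). With your reading one would conclude $k\mid m_2$; the correct conclusion is $k\mid m_1$.

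This matters because the statement as printed (``for some divisor $k$ of $m_2$'') appears to be a typo for $m_1$: in every application in \Cref{sec:results} the authors let $k$ range over the divisors of $N/n=m_1$, e.g.\ for $N=66$, $n=22$, $d=11$ they take $k\in\{1,3\}$ even though $m_2=2$, and for $N=70$ they take $k=2$ even though $m_2=1$. So you have set yourself the task of proving the misprinted version and supplied an incorrect mechanism for it. To be fair, the paper's own one-line proof (``the result now follows from \Cref{prop:2.1}'') literally only yields $k\mid m_1m_2$ and does not spell out the refinement either; the refinement to $k\mid m_1$ follows from the observation above, namely that $\mu(C_{m_2})=C_{m_2}^\sigma$ forces the subgroup $kC_m$ of \Cref{prop:2.1} to have index dividing $m_1$ in $C_m$. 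Your instinct that this step is where the real care is needed was right; the resolution just runs in the opposite direction from the one you proposed.
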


\begin{proof}
    Let $E/K$ be the curve corresponding to $x\in X_0(N)(K)$, where $K$ is a quadratic field. Then $E$ also corresponds to a point on $X_0(n)$ which is, by assumption, $\PP^1$-parametrized. Hence, by \cite[Theorem 12]{BruinNajman15} $E$ is a $\Q$-curve, and from the arguments of \cite{BruinNajman15}, it can easily be seen that it is a $\Q$-curve of degree $d$. The result now follows from  \Cref{prop:2.1}.
\end{proof}

%TODO: $m_2$ could have been included in the 1st case?

\begin{comment}
If the setup is such that some prime $p>2$ divides both $\frac{m}{k}$ and $k$, then the curve $E_2$ from the diagram has three $p$-isogenies over $K$. But then all of its isogenies are defined over $K$, and the mod $p$ Galois representation consists only of scalar matrices. Then the determinant is nonsurjective, so there is a nontrivial intersection between $K$ and $\Q(\zeta_p)$. This means that $K$ is the unique quadratic subfield of $\Q(\zeta_p)$, i.e. $K=\Q(\sqrt{\pm p})$, with the sign being $+1$ if $p\equiv 1 \pmod 4$ and $-1$ if $p \equiv -1 \pmod 4$.

\begin{proposition}[{\cite[Proposition 2]{MerelStein01}}]
    Let $p\equiv 1 \pmod 4$. There does not exist an elliptic curve over $\Q(\sqrt{p})$ with all of its subgroups of order $p$ defined over $\Q(\sqrt{p})$.
\end{proposition}

This implies that in the notation \Cref{cor:main} the case where $m/k$ and $k$ are both divisible by $p$ if $p\equiv 1 \pmod 4$ can be excluded. 
\end{comment}

Let $J_0^-(p)$ be the quotient $J_0(p)/(1+w_p)J_0(p)$ of $J_0(p)$. The following result will turn out to be very useful to us.

\begin{theorem}[Momose {\cite[Theorem 0.1]{Momose1987}}] \label{thm:momose}
    Let $N$ be a composite positive integer. If $N$ has a prime divisor $p$ which satisfies the following conditions (i) and (ii), then there are no non-cuspidal non-CM points in $X_0^+(N)(\Q)$:
    \begin{itemize}
        \item[(i)] $p\geq 17$ or $p=11$.
        \item[(ii)] $p\neq 37$ and $\#J_0^-(p)(\Q)<\infty.$
    \end{itemize}
\end{theorem}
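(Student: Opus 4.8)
The plan is to run Mazur's method on a suitable minus quotient of $J_0(p)$, with the prime $p\mid N$ supplying the rigidity. Let $x\in X_0^+(N)(\Q)$ be non-cuspidal and non-CM. As recalled in \Cref{sec:notation}, $x$ corresponds to a $\Q$-curve $E$ of degree $N$, either defined over $\Q$ or over a quadratic field $K$ with $E^\sigma$ being $N$-isogenous to $E$. If $E$ is defined over $\Q$ then $x$ comes from a genuine $\Q$-rational cyclic $N$-isogeny; but no composite $N$ admitting a prime divisor $p$ as in (i) carries such an isogeny, by the Mazur--Kenku classification of rational cyclic isogenies, so this split case is empty and I may assume $E$ is a genuine $\Q$-curve over a quadratic field $K$. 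Since $p\mid N$, the curve $E$ carries a $G_K$-stable cyclic subgroup $C_p\subset E[p]$, so $\bar\rho_{E,p}\colon G_K\to\GL_2(\F_p)$ is reducible; writing it upper-triangularly with diagonal characters $\lambda,\lambda'\colon G_K\to\F_p^\times$, the action on $C_p$ is given by the isogeny character $\lambda$, and $\lambda\lambda'=\chi_p$ is the mod $p$ cyclotomic character.

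The first main step is to constrain $\lambda$. I would use three inputs: the ramification of $\lambda$ away from $p$ is controlled because $E$ acquires good reduction after a bounded extension; the restriction $\lambda|_{I_p}$ is forced by the theory of finite flat group schemes (Raynaud--Serre) to be a power of the tame cyclotomic character, so that $\lambda^{12}$ is cyclotomic on inertia; and the $\Q$-curve relation $E\sim E^\sigma$ links $\lambda^\sigma$ to $\lambda$ and $\lambda'=\chi_p\lambda^{-1}$, which is what lets the isogeny character be descended from $G_K$ to $G_\Q$. Putting these together pins $\lambda$ down to a short explicit list (a small power of $\chi_p$ times a character of bounded order), and it is exactly here that the hypotheses enter: the primes excluded by (i), namely $p\leq 13$, are those where Mazur's list of rational isogeny degrees and the CM exceptions create extra solutions, while $p=37$ is singled out in (ii) because it is the one degree carrying a genuine non-CM configuration that the method cannot eliminate.

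Once $\lambda$ is essentially cyclotomic, the degree-two preimage of $x$ in $X_0(N)(\Q)$ determines, via the Abel--Jacobi map followed by the degeneracy-induced map $J_0(N)\to J_0(p)$ and projection onto the $w_p=-1$ quotient, a well-defined class in $J_0^-(p)(\Q)$; the minus quotient is the correct target precisely because the $\Q$-curve/Atkin--Lehner sign lands the relevant Hecke-eigendata there. Now I invoke the hypothesis $\#J_0^-(p)(\Q)<\infty$: the class of $x$ is then torsion. Choosing an auxiliary prime $\ell\nmid Np$ of good reduction, torsion injects under reduction modulo $\ell$, so the reduction of $x$ is determined by its image in $J_0^-(p)(\F_\ell)$. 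One then shows that this reduction lands in the cuspidal locus and that the map $X_0^+(N)\to J_0^-(p)$ is a formal immersion at the relevant cusp, computed on cotangent spaces via the $q$-expansions of the weight-two newforms spanning the minus-part together with the Hecke operator $T_\ell-\ell-1$. A formal immersion cannot send the distinct $\Z_{(\ell)}$-sections given by $x$ and by the cusp to the same point over $\F_\ell$, so $x$ must be the cusp, contradicting non-cuspidality.

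The step I expect to be the main obstacle is the verification of the formal immersion, interlocked with the character bookkeeping at the boundary primes $p\in\{11,13,17,37\}$. Proving that the minus-quotient cotangent data is nondegenerate at the cusp (so that the immersion criterion actually holds) is the delicate heart of the argument, and this is exactly where the finiteness $\#J_0^-(p)(\Q)<\infty$ is indispensable: any rational torsion on $J_0^-(p)$ beyond what the cusps account for would destroy the reduction-injectivity on which the contradiction rests. Reconciling the explicit list of possible isogeny characters with the precise set of primes allowed by (i) and (ii) is the other place where uniform reasoning gives way to genuine, if finite, case analysis.
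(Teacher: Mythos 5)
This statement is not proved in the paper at all: it is Momose's theorem, quoted verbatim with a citation to \cite{Momose1987} and used as a black box (e.g.\ in \Cref{sec:hyp} and the tables). So there is no internal proof to compare against, and the only meaningful question is whether your sketch is a viable reconstruction of Momose's argument. At the level of architecture it is: the genuine proof does run through the isogeny character $\lambda$ of the $p$-part (bounding $\lambda^{12}$ via Raynaud/Serre at $p$ and good reduction away from $p$), and through a Mazur-style formal immersion of (a symmetric power of) $X_0(p)$ into $J_0^-(p)$, using finiteness of $J_0^-(p)(\Q)$ to force the relevant divisor class to be torsion and then reducing modulo an auxiliary prime. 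You have correctly identified all the load-bearing components. But as written this is a road map, not a proof: the two decisive steps --- pinning $\lambda$ down to an explicit short list, and verifying the formal immersion on cotangent spaces --- are exactly the ones you defer, and they are where all the work lies.

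Two specific points are off. First, your explanation of condition (i) is not right: the primes $p\le 13$, $p\ne 11$ are excluded not because ``Mazur's list creates extra solutions'' but because $g(X_0(p))=0$ for $p\in\{2,3,5,7,13\}$, so $J_0^-(p)=0$ and the method has no target; condition (i) is precisely the condition $g(X_0(p))\ge 1$. (The exclusion of $p=37$ is, as you say, because of the non-CM rational points of $X_0(37)$, which defeat the isogeny-character step.) Second, your closing claim that finiteness of $J_0^-(p)(\Q)$ is needed to preserve ``reduction-injectivity'' inverts the logic: injectivity of torsion under reduction at a prime of good reduction (odd, or with the usual care at $2$) holds regardless of how large the torsion is. The finiteness hypothesis is used earlier and differently --- it says the Mordell--Weil group has rank $0$, so that the class attached to your quadratic point is torsion at all; without that the class could be of infinite order and the reduction argument never starts. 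If you want to turn the sketch into a proof you would need to carry out the character classification and the cotangent-space computation explicitly, which is essentially reproducing \cite{Momose1987}; for the purposes of this paper the theorem is simply imported.
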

\begin{comment}
\subsection{Helpful results}

In case 1) the quadratic points correspond to points on either $X_0(dm^2)^+$ or $X_0(dm)/w_d$. $X_0(N)^+$ has no non-CM points if it is divisible by a $p$ with $J_0(p)$ of dimension $>1$ with $J_0^-(p)$ finite. This is a consequence of \cite[Theorem 0.1]{Momose87}. See also \cite[Proposition 2.11]{Momose87} for some cases where the same thing can be proved, but are not covered by the previous result. 

Corollary no non-CM points on $X_0(6^2*11)^+$, $X_0(2^2*41)^+$, $X_0(3^2*29)^+$.
\end{comment}
\section{Results} \label{sec:results}
In this section, we list the results obtained on quadratic points on $X_0(N)$. The table below lists the values $n$ and $d$, where $X_0(n)$ is the modular curve we are ``going down" to, such that $X_0(n)$ is hyperelliptic with $\rk J(\Q)=0$, and $w_d$ is the hyperelliptic involution of $X_0(n)$; see the tables in \cite{bruin_najman2016}. 

\begin{table}[H]
    \centering
    \begin{tabular}{c|c||c|c||c|c||c|c}
     n & d & n & d & n & d& n & d\\
    \hline
    $22$ & $11$   &

    $23$ & $23$  &

    $26$ & $26$ &

    $28$ & $7$ \\

    $29$ & $29$ &

    $30$ & $15$ &

    $31$ & $31$  &

    $33$ & $11$ \\

    $35$ & $35$  &

    $39$ & $39$ &

    $41$ & $41$  &

    $46$ & $23$ \\

    $47$ & $47$  &

    $50$ & $50$ &

    $59$ & $59$  &

    $71$ & $71$ \\

  \end{tabular}
    \caption{Hyperelliptic $X_0(n)$ with rank 0 Jacobian}
    \label{tab:my_label}
\end{table}
Next we list the remaining values of $N\leq 100$ such that the quadratic points on $X_0(N)$ have not yet been determined. There are $4$ columns per case. The value $N$ denotes the $X_0(N)$ we are studying, and the value $g$ in the $4$th column denotes the genus $g(X_0(N))$ of $X_0(N)$. The value $n$ denotes the $X_0(n)$ we are ``going down" to in the sense that we have a description of all the quadratic points on $X_0(n)$. If $X_0(n)$ is hyperelliptic with $\rk J_0(\Q)=0$, then $d$ is such that $w_d$ is the hyperelliptic involution on $X_0(n)$; there is a single exception i.e. the case $n=48$ where the hyperelliptic involution is not an Atkin-Lehner involution (there is a $*$ in the $d$ column in this case). This case is dealt with separately in \Cref{sec:96}. If $X_0(n)$ is not hyperelliptic then $X_0(n)$ has finitely many quadratic points (note that then $g(X_0(n))>2$ in all these cases); there is a $-$ in the $d$ column. 

\begin{table}[H]
    \centering
    \begin{tabular}{c|c|c|c|||c|c|c|c}
     N & n & d & g & N & n& d & g\\
    \hline
    $66$ & $22$   & $11$ & $9$  &

    $70$ & $35$   & $35$ & $9$   \\

    $78$ & $39$   & $39$ & $11$  &

    $82$ & $41$   & $41$ & $9$   \\

    $84$ & $42$   & $-$ & $11$  &

      $86$ & $43$   & $43$ & $10$  \\

    $87$ & $29$   & $29$ & $9$   &

    $88$ & $44$   & $-$ & $9$  \\ 
    
    $90$ & $45$   & $-$ & $11$  &

    $96$ & $48$   & $*$ & $9$   \\

    $99$ & $33$   & $11$ & $9$  &

    &    &  & 
\end{tabular}
    \caption{The remaining $X_0(N)$ for $N\leq 100$}
    \label{tab:my_label}
\end{table}

\subsection{The cases where $X_0(n)$ has finitely many quadratic points} \label{subsec:41}

The easiest cases are when $X_0(n)$ has finitely many quadratic points. In these cases $N=84,88,90$, and $n$ is $n=42, 44$ or $45$, respectively; the quadratic points on these $X_0(n)$ can be found in \cite{OzmanSiksek19}. For $n=42$ and $44$, all the points are CM points.

On the other hand, $X_0(45)$ has 2 isogeny classes, each consisting of 4 non-CM curves with $45$-isogenies. We can immediately see that none of these non-CM curves with a $45$-isogeny will have a $90$-isogeny, as there would be at least 8 non-CM curves with $45$-isogenies in a class containing a non-CM curve with a $90$-isogeny. 

Thus, for all 3 of these values of $N$, it remains to check whether any of the quadratic CM points on $X_0(n)$ lift to quadratic points on $X_0(N)$, and if yes, to how many points. 

The results of \cite{CGPS22}, as listed in the GitHub repository \url{https://github.com/fsaia/least-cm-degree}, can be used for showing that there are no CM points on $X_0(N)$ when this is the case. In particular, this is true for the values $N=70,87,90$ and $96$ of those that we consider.

When there exist CM points on $X_0(N)$, one way to count the number of points corresponding to each CM $j$-invariant, without doing computer calculations, is by studying the isogeny graphs of these CM elliptic curves. 

\begin{comment}

\begin{example}\label{ex31}
    There are 2 points, up to conjugation (so $4$ in total) on $X_0(33)$ corresponding to the elliptic curve with $j=8000$ with CM by $\Z[\sqrt{-2}]$. Let
    $$\fp_2^2=2\Z[\sqrt{-2}], \quad \fp_{3,1}\fp_{3,2}=3\Z[\sqrt{-2}], \quad \fp_{11,1}\fp_{11,2}=11\Z[\sqrt{-2}]  $$
be the factorizations of $2,3$ and $11$ in $\Z[\sqrt{-2}]$. Then the 4 points on $X_0(33)$ corresponding to $j=8000$ are multiplication by $\fp_{3,i}\fp_{11,j}$ for $1\leq i,j\leq 2.$ Multiplication by $\fp_2$ is the only $2$-isogeny of elliptic curves with $j=8000$ over $\Q(\sqrt{-2})$ (the full $2$-torsion is defined over $\Q(\sqrt 2)$). Hence, we conclude that all $66$-isogenies on this elliptic curve correspond to multiplication by $\fp_2\fp_{3,i}\fp_{11,j}$ for $1\leq i,j\leq 2.$ Thus, there are $4$ such points, 2 counting up to Galois conjugation, so the multiplicity (i.e. the number of points with this $j$-invariant) is 2.
\end{example}
\end{comment}

%\begin{comment}
\begin{example}\label{ex31}
    Consider the case $N=88$. We have $n=44$. Let $K=\Q(\sqrt{-7})$. The ring of integers is $\OO_K=\Z\left[\frac{1+\sqrt -7}{2} \right]$. Elliptic curves with $j$-invariant $--3375$ have CM by $\OO_K$ and elliptic curves with $j=16581375$ have CM by $\Z[\sqrt {-7}]$.
    Let 
    $$\fp_{2,1}\fp_{2,2}=2\OO_K, \quad \fp_{11,1}\fp_{11,2}=11\OO_K$$
    be factorizations in $\OO_K$. Note that $2$ does not split in $\Z[\sqrt {-7}]$. There is a $2$-isogeny $f$ from an elliptic curve with $j=-3375$ to an elliptic curve with $j=16581375$, and a dual  $2$-isogeny $\hat f$ in the other direction. The $8$ points on $X_0(44)$ with $j=-3375$ correspond to 
    $$\fp_{2,i}^2\fp_{11,j} \text{ and } \quad f\circ \fp_{2,i}\fp_{11,j} \text{ for }1\leq i,j\leq 2,$$
    while the $4$ points on $j=16581375$ correspond to 
    $$\quad  \fp_{2,i}\fp_{11,j} \circ \hat f \text{ for }1\leq i,j\leq 2.$$
It follows that the quadratic points on $X_0(88)$ for $j=-3375$ correspond to $$\fp_{2,i}^3\fp_{11,j} \text{ and } \quad f\circ \fp_{2,i}^2\fp_{11,j} \text{ for }1\leq i,j\leq 2,$$
    while the points on $j=16581375$ correspond to 
    $$\quad  \fp_{2,i}^2\fp_{11,j} \circ \hat f \text{ for }1\leq i,j\leq 2.$$
Hence there are $8$ quadratic points on $X_0(88)$ up to conjugacy ($8$ in total) corresponding $j=-3375$ and $4$ ($2$ up to conjugacy) $j=16581375$. 
\end{example}

Another possibility (which we always use as a sanity check) is to use the code developed in \cite{AKNOV24} for computing the number of quadratic points on $X_0(N)$ above a fixed (not necessarily CM) $j$-invariant. See \githubbare{86.m}, \githubbare{88.m} and \githubbare{90.m} in our Github repository for details. 

\subsection{The cases where $X_0(n)$ is hyperelliptic and a $w_d$ is the hyperelliptic involution.} \label{sec:hyp}
This case covers the cases $N=66,70,78,82,87,99$ and $n=22, 35, 39,41,$ $ 29, 33$, respectively. It does not cover the case $N=96$, where $X_0(48)$ is hyperelliptic, but the hyperelliptic involution is not Atkin-Lehner. Let $d$ be such that $w_d$ is the hyperelliptic involution on $X_0(n)$. By \Cref{cor:main}, since $N/n$ is prime, all the non-CM quadratic $\PP^1$-parametrized points on $X_0(N)$ correspond to rational points on $X_0(N)/w_d$ (if $k=1$) or $X_0\left(\frac{N^2d}{n^2}\right)^+$ (if $k=\frac N n$). For $N=66,78,87,99$ the curve $X_0\left(\frac{N^2d}{n^2}\right)^+$ has no rational non-cuspidal non-CM points by \Cref{thm:momose} and the same is true for $N=70$ by \cite[Proposition 2.1. (a)]{Momose87}. In all the cases except for $N=82,99$, the curve $X_0(N)/w_d$ turns out to be a genus $2$ curve whose Jacobian has rank $0$ over $\Q$; it is trivial to find all the rational points on these curves. In the cases of $N=82,99$, the curve $X_0(N)/w_{d}$ is a genus 3 curve whose Jacobian has rank 1 over $\Q$. We compute all rational points on $X_0(N)/w_{d}$ using the (classical) Chabauty method. We use the implementation from \url{https://github.com/steffenmueller/QCMod}, for which the theory is described in \cite{BDMTV23, Balakrishnan}. 

Next we need to explicitly check that the exceptional (or $\PP^1$-isolated) points of $X_0(n)$ do not lift to quadratic points on $X_0(N)$. One way this can sometimes easily be done is by seeing that the number of points over a fixed quadratic field would need to be larger than it is, as we did for $N=90$ in \Cref{subsec:41}. 

Another approach, which always works, to do this for non-CM points is as follows. Let $p:=N/n$ (this is always a prime in our cases) and let $a$ be the power of $p$ dividing $N$, but not $n$. Let $E/K$, for $K$ a quadratic field, be an elliptic curve having the same $j$-invariant as the point on $X_0(n)(K)$ that we want to eliminate. Note that the choice of a a particular quadratic twist of $E$ is irrelevant, as having an $N$-isogeny is a quadratic-twist invariant property. If $E$ had an $a$-isogeny over $K$, then $\Gal(\overline K /K)$ would act on the set of $x$-coordinates of the generators of the kernel of such an isogeny.
We factor the primitive $a$-th division polynomial $\psi'_{a}:=\psi_a/\psi_{\frac ap}$ (where $\psi_n$ is the standard $n$-th division polynomial) of $E$. If $\psi'_{a}$, whose roots are all the $x$-coordinates of generators of cyclic subgroups of order $a$ of $E[a]$, has no factors of degree $\leq \varphi(a)/2$, we conclude that there is no $\Gal(\overline K /K)$-invariant cyclic subgroup of order $a$ of $E$ over $K$, and hence no $a$-isogeny. In the most common case of $a=2$, this amounts to just checking whether $E(K)$ has a 2-torsion point.

Finally, we need to check whether there are any CM points (possibly even coming from $\PP^1$-parametrized points on $X_0(n)$) using the results of \cite{CGPS22}, and if there are, what their multiplicities are. This is done, as explained already in \Cref{subsec:41}, either by looking at their isogeny graphs as in \Cref{ex31}, or by explicitly computing the number of fibers that map to the fixed CM $j$-invariant $j_0$ with respect to the $j$-map $j:X_0(N)\rightarrow X_0(1)$.

\subsection{The case $X_0(86)$}
  The curve $X_0(86)$ is the only case where $X_0(n)$ (in this case $n=43$) is neither hyperelliptic nor does it have finitely many quadratic points. However, the quadratic points on $X_0(43)$ are described in \cite[\S 4.1]{Box19} and it is proved that all but 8 points on $X_0(43)$ are $\Q$-curves of degree $43$. This means that we can apply our methods after checking that the 8 exceptional points do not lift to quadratic points on $X_0(86)$. The curve $X_0(86)/w_{43}$ is a genus 4 curve whose Jacobian has rank 2 over $\Q$. We again apply the Chabauty method as described in \cite{BDMTV23, Balakrishnan} to determine all the rational points. 

   We note that $X_0(86)$ is the only case $N\leq 100$ that we consider here for which the Atkin-Lehner sieve from \cite{AKNOV24} succeeds. The Atkin-Lehner sieve also reduces finding quadratic points on $X_0(86)$ to finding rational points on $X_0(86)/w_{43}$. So in this case, our "going down" method produces the same results as the Atkin-Lehner sieve, but without any computation. We note that the solution forrrr this case was included in the Github repository of \cite{AKNOV24}, although it was not mentioned in the paper. We include the list of quadratic points in \Cref{sec:tables} for the sake of completeness.

\subsection{The case $X_0(96)$} \label{sec:96}

The case of $X_0(96)$ is more complicated. We will need some finer considerations of the going-down method to deal with it.

%\begin{proposition}
%    Let $m$ and $n$ be positive integers, where $n$ is odd, such that there exists a finite set $S$ such that all quadratic points on $X_0(2^mn)$ outside of $S$ preimages of the rational points on the quotient of $X_0(2^mn)$ by some involution $w$, where $w$ is not an Atkin-Lehner involution. 

%    Suppose that for all quadratic points $P\not \in S$ on $X_0(2^mn)$ $D[2^\infty]\nsubseteq C[2^\infty]$, where $C$ and $D$ are defined as follows. We have $P=(E,C)$ where $E/K$ is an elliptic curve over a quadratic field $K$ and $C$ a cyclic group of order $2^mn$ defined over $K$, $\sigma $ the non-trivial automorphism of $K$ and $D$ be the kernel of the isogeny from $E$ to a twist of $E^\sigma$. 

   % Then all the quadratic points on $X_0(2^{n+1}m)$ that do not map to $S$ in $X_0(2^nm)$ under the projection map are CM points. 
%\end{proposition}
%\begin{proof} Assume that $E$ does not have CM. We view all curves in the proof up to $\overline \Q$-isomorphism. We have the following $2$-isogeny diagram, where all lines represent $2$-isogenies, all defined over $K$:\\
%    \begin{tikzcd}
%    E_0 \ar[r,dash] & E_1 \ar[r,dash]  \ar[d, dash] & E_2 \ar[d, dash]  \ar[r, dash] &\ldots  \ar[r, dash] & E_{m-1} \ar[d, dash] \ar[r, dash] & E_m \\
%    & F_1 & F_2 & & F_m &
%    \end{tikzcd}
%    By our assumptions $E_0^\sigma$ is isomorphic to some $F_k'$, where $F_k'$ is isogenous by an odd degree isogeny to a unique $F_k$ (and is not isogenous by an odd degree isogeny to any $E_k$). Moreover, since $E_0^\sigma$
%\end{proof}

\begin{proposition}\label{case96}
    The modular curve $X_0(96)$ has no quadratic points
\end{proposition}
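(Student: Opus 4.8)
The plan is to follow the going-down strategy, but now accounting for the fact that the hyperelliptic involution on $X_0(48)$ is not an Atkin-Lehner involution, which is precisely why \Cref{cor:main} does not apply directly and a finer analysis is required. I would first separate the quadratic points on $X_0(96)$ into CM points and non-CM points. For the CM points, the expectation (given the pattern for $N=70,87,90,96$ noted earlier) is that the results of \cite{CGPS22} via the \texttt{least-cm-degree} repository show that there are no CM points on $X_0(96)$ at all; I would invoke that computation directly. This disposes of the CM case and lets me focus entirely on non-CM quadratic points for the remainder.

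For the non-CM points, I would push the point down along the degree-$2$ map $X_0(96)\to X_0(48)$ and ask whether the image is $\PP^1$-parametrized or $\PP^1$-isolated on $X_0(48)$. First I would handle the $\PP^1$-isolated (exceptional) non-CM points of $X_0(48)$ exactly as in \Cref{sec:hyp}: here $p=N/n=2$ and $a$ is the power of $2$ dividing $96$ but not $48$, so $a=2$, meaning a lift to $X_0(96)$ would require an extra $2$-isogeny, equivalently a $K$-rational $2$-torsion structure; I would factor the relevant primitive division polynomial $\psi'_2$ for each exceptional $j$-invariant and check it has no factor of degree $\le \varphi(2)/2$, ruling out such lifts. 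The more delicate part is the $\PP^1$-parametrized points, i.e. those with $x$-coordinate rational on the hyperelliptic model of $X_0(48)$. By \cite[Theorem 12]{BruinNajman15} these still correspond to $\Q$-curves, so I would invoke \Cref{prop:2.1} directly rather than its corollary: with $m$ and $d$ extracted from the isogeny degree $96$ and the $\Q$-curve degree, a parametrized non-CM point forces a rational non-cuspidal non-CM point on some $X_0(\tfrac{m}{k}d)/w_d$ or on some $X_0(k^2d)^+$, and I would then rule out each of these finitely many target curves.

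The main obstacle, and where the bulk of the argument lives, is determining the correct value of $d$ (the $\Q$-curve degree) and hence the precise list of auxiliary curves $X_0(\tfrac{m}{k}d)/w_d$ and $X_0(k^2d)^+$ that \Cref{prop:2.1} produces, since the hyperelliptic involution on $X_0(48)$ not being Atkin-Lehner means I cannot simply read off $d$ as the Atkin-Lehner degree the way \Cref{cor:main} does. I would analyze the hyperelliptic involution of $X_0(48)$ concretely to identify which isogeny structure the $\PP^1$-parametrized points carry, and thereby pin down $d$; with $96 = 2\cdot 48 = 2^5\cdot 3$, the candidate decompositions $N=m_1 m_2 d$ are few, so this reduces to a short enumeration. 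For each resulting plus-quotient $X_0(k^2d)^+$ I would apply \Cref{thm:momose} or the explicit results of \cite{Momose87} to kill the non-CM rational points, and for each $X_0(\tfrac{m}{k}d)/w_d$ I would exhibit that its rational points are only cusps and CM points by a direct genus/rank computation (rank $0$ Jacobian cases being immediate, higher-rank cases handled by Chabauty as in the other subsections). Assembling these eliminations shows no non-CM quadratic point survives, which together with the absence of CM points gives the claim.
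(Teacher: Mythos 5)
Your CM step is fine and matches the paper: the results of \cite{CGPS22} rule out CM points on $X_0(96)$ outright. The gap is in the non-CM step. Your plan ultimately rests on feeding the point into \Cref{prop:2.1}, but that proposition requires the $\Q$-curve degree $d$ and the auxiliary isogeny degree $m$ to be \emph{coprime}, and for $N=96$ this hypothesis cannot be met. By \cite[Section 3.4]{BruinNajman15}, a non-CM quadratic point on $X_0(48)$ corresponds to a $\Q$-curve of degree $d=12$ (with a prescribed intersection of the $12$-isogeny kernel and the $48$-cyclic subgroup), and since $96=2^5\cdot 3$ the only divisor of $96$ coprime to $12$ is $m=1$. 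So the enumeration of decompositions $N=m_1m_2d$ that you propose comes up empty: \Cref{prop:2.1} produces no auxiliary curves $X_0(\tfrac{m}{k}d)/w_d$ or $X_0(k^2d)^+$ carrying any information, and there is nothing to kill with Momose or Chabauty. This is exactly why the paper singles out $N=96$ as the case where the going-down machinery of \cite{NajmanVukorepa23} does not apply. (A smaller slip: your division-polynomial check uses $a=2$, but $a$ should be the full $2$-power $2^5=32$; a point of $X_0(48)$ already has a Galois-stable subgroup of order $2$, so testing for $2$-torsion is vacuous.)

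What the paper does instead is a direct isogeny-graph argument that uses the \emph{quantitative} part of the Bruin--Najman statement, namely that the kernel of the $12$-isogeny $E\to E^\sigma$ meets the given cyclic $48$-subgroup in a group of order exactly $6$. Writing the putative $96$-isogeny as a chain of five $2$-isogenies $E_0-E_1-\cdots-E_5$ together with the $3$-isogenous companions $E_i'$, one applies this fact to the $48$-isogenies $E_0\to E_4'$ and $E_1\to E_5'$ to pin down $E_0^\sigma$ and $E_1^\sigma$ as specific vertices ($F_1'$ and $F_2'$) hanging off the chain. Since $E_0^\sigma$ and $E_1^\sigma$ must be $2$-isogenous, the $2$-isogeny graph then contains a cycle, which forces CM and contradicts the non-CM assumption. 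Some argument of this finer type (or the paper's subsequent generalization to $2^m n$-isogenies) is needed; the reduction to rational points on quotient curves that you outline does not get off the ground here.
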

\begin{proof} We view all curves in the proof up to $\overline \Q$-isomorphism. We start by observing that there are no CM elliptic curves over quadratic fields with $96$-isogenies by the results of \cite{CGPS22}.
    By \cite[Section 3.4]{BruinNajman15}, every non-CM elliptic curve $E$ with a cyclic subgroup $C$ of order 48, both defined over a quadratic field $K$, is $12$-isogenous to its Galois conjugate $E^\sigma$ by an isogeny whose kernel has intersection with $C$ of order $6$.
    
    Assume that $E$ has a 96-isogeny and does not have CM. It follows we have the following $2$-isogeny diagram, where all lines represent $2$-isogenies, all defined over $K$.\\
    \begin{center} \begin{tikzcd}
    E_0 \ar[r,dash] & E_1 \ar[r,dash]  \ar[d, dash] & E_2 \ar[d, dash]  \ar[r, dash] &E_3  \ar[r, dash] \ar[d, dash] & E_{4} \ar[d, dash] \ar[r, dash] & E_{5} \\
    & F_1 & F_2 & F_3& F_4 & 
    \end{tikzcd}\\
    \end{center}
    Each of the elliptic curves $E_i$ and $F_j$ in the diagram also has a $3$-isogenous curve $E_i'$ and $F_j'$, and we have an analogous $2$-isogeny diagram.
    \begin{center} \begin{tikzcd}
    E_0' \ar[r,dash] & E_1' \ar[r,dash]  \ar[d, dash] & E_2' \ar[d, dash]  \ar[r, dash] &E_3'  \ar[r, dash] \ar[d, dash] & E_{4}' \ar[d, dash] \ar[r, dash] & E_{5}' \\
    & F_1' & F_2' & F_3' & F_4' & 
    \end{tikzcd}\\
    \end{center}
    By \cite[Section 3.4]{BruinNajman15}, since $E_0$ is $48$-isogenous to $E_4'$, it follows that $E_0^\sigma$ is $12$-isogenous to $E_0$ by an isogeny whose kernel has intersection with $\ker(E_0\to E_4')$ of order $6$. It follows that $E_0^\sigma$ is isomorphic to $F_1'$. 

    However, applying analogous reasoning to $E_1$ and the isogeny $E_1\rightarrow E_5'$, it follows that $E_1^\sigma$ is isomorphic to $F_2'$.

    Since $E_0$ and $E_1$ are $2$-isogenous, it follows that $E_0^\sigma$ and $E_1^\sigma$ are $2$-isogenous. Hence, $F_1$ and $F_2$ are $2$-isogenous. But now the $2$-isogeny graph is not a tree. This implies that all the curves 
    in the isogeny class have CM, contradicting our assumption. 
    \end{proof}

We can adapt the proof of this proposition to obtain a more general version.

\begin{proposition}
    Let $K$ be a quadratic number field. Let $m,n$ and $k$ be positive integers such that $2^k$ divides $n$. Suppose that any elliptic curve $E/K$ with a $2^{m-1}n$-isogeny defined over $K$ is $n$-isogenous over $K$ to its Galois conjugate, with the intersection of kernels of the $n$-isogeny and the $2^{m-1}n$-isogeny of size $n/2^k$. 
    Then any elliptic curve $E/K$ with a $2^mn$-isogeny over $K$ is a CM curve.
\end{proposition}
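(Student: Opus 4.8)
The plan is to run the argument of \Cref{case96} with the specific integers replaced by $m$, $n$ and $k$. Write $n = 2^a b$ with $b$ odd; the hypothesis $2^k \mid n$ then forces $k \le a$. Suppose, for contradiction, that $E = E_0$ is a non-CM elliptic curve over $K$ with a cyclic $2^m n$-isogeny defined over $K$, and let $\sigma$ generate $\Gal(K/\Q)$. Because the isogeny class of $E_0$ contains no CM curve, its $2$-isogeny graph is a tree, and the whole proof consists of exhibiting a cycle in it. First I would extract the $2$-primary part of the $2^m n$-isogeny: it is a cyclic $2^{m+a}$-isogeny and so yields a path of $K$-rational $2$-isogenies $E_0 - E_1 - \cdots - E_{m+a}$, which will serve as the main chain of the picture.

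Next I would apply the hypothesis twice. The curve $E_0$ has a $2^{m-1}n$-isogeny (the degree-$2^{m-1}n$ sub-isogeny of the given one), whose $2$-part is the sub-chain $E_0 - \cdots - E_{m+a-1}$; hence $E_0$ is $n$-isogenous over $K$ to $E_0^\sigma$, and the stated order $n/2^k$ of the intersection of kernels means that the $2$-part $C_{2^a}$ of this $n$-isogeny shares exactly its first $a-k$ steps with the chain and then leaves it. Thus $E_0^\sigma$ is reached from $E_0$ by walking to $E_{a-k}$, branching off the chain for $k$ further $2$-isogenies, and finally applying the odd $b$-isogeny. The key new input is that $E_1 = E_0/\ker(E_0 \to E_1)$ again has a $2^{m-1}n$-isogeny, namely the quotient of the $2^m n$-isogeny, whose $2$-part is $E_1 - \cdots - E_{m+a}$; applying the hypothesis to $E_1$ places $E_1^\sigma$ at $E_{a-k+1}$, then $k$ steps off the chain, then the $b$-isogeny.

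Finally I would produce the contradiction. Applying $\sigma$ to the $K$-rational $2$-isogeny $E_0 \to E_1$ shows that $E_0^\sigma$ and $E_1^\sigma$ are $2$-isogenous over $K$. On the other hand, the odd $b$-isogenies through which $E_0^\sigma$ and $E_1^\sigma$ were reached are parallel and commute with the $2$-isogenies, so in measuring distance in the $2$-isogeny tree they cancel and only the $2$-primary walks remain. Those walks leave the chain at the adjacent distinct vertices $E_{a-k}$ and $E_{a-k+1}$, so the unique reduced path joining the two conjugates has length $k + 1 + k = 2k+1$. Since $k \ge 1$ this is at least $3$, so the direct $2$-isogeny between $E_0^\sigma$ and $E_1^\sigma$ is an edge outside the tree and creates a cycle, contradicting that the isogeny class is non-CM. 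Hence $E$ has CM.

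The step I expect to be the main obstacle is the middle one: turning the numerical hypothesis on the order of the intersection of kernels into the precise combinatorial statement that the $n$-isogeny leaves the main chain after exactly $a-k$ steps, so that the branch points for $E_0^\sigma$ and $E_1^\sigma$ are the \emph{neighbouring} chain vertices $E_{a-k}$ and $E_{a-k+1}$ (this is exactly what forces distance $2k+1$ rather than $1$). One must also confirm that the odd part $b = n/2^a$ is transported compatibly along the chain, so that it genuinely cancels when the two conjugates are compared; once both conjugates are correctly located, the cycle and the contradiction follow as in \Cref{case96}.
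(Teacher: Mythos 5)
Your proof is correct and takes essentially the same route as the paper's: apply the hypothesis to both $E_0$ and $E_1$, locate the Galois conjugates as branches hanging off adjacent vertices of the $2$-isogeny chain, and obtain a cycle in the $2$-isogeny tree from the fact that $E_0^\sigma$ and $E_1^\sigma$ must themselves be $2$-isogenous. Your placement of the branch points at $E_{a-k}$ and $E_{a-k+1}$ is in fact the one forced by the kernel-intersection computation and matches the worked case of \Cref{case96} (where the conjugates land at $F_1'$ and $F_2'$), whereas the paper's general proof writes $F'_{m+r-k-1}$ and $F'_{m+r-k}$ --- an index slip that is harmless because only the adjacency of the two branch points is used; your explicit distance count $2k+1$ also covers $k>1$, where the conjugates are no longer among the labelled $F_j'$.
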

\begin{remark}
    Taking $m=5$, $n=12$ and $k=1$, we recover the result of  \Cref{case96}.
\end{remark}

\begin{proof}
    Let $n=2^r\cdot s$, with $s$ odd. 
    Suppose that there is a non-CM elliptic curve $E_0/K$ with a $2^m\cdot n$-isogeny. Then we have the following $2$-isogeny diagram. 
\begin{center}
    \begin{tikzcd}
    E_0 \ar[r,dash] & E_1 \ar[r,dash]  \ar[d, dash] & E_2 \ar[d, dash]  \ar[r, dash] &\ldots  \ar[r, dash] & E_{m+r-1} \ar[d, dash] \ar[r, dash] & E_{m+r} \\
    & F_1 & F_2 & & F_{m+r-1} &
    \end{tikzcd}
\end{center}
Each of the elliptic curves $E_i$ and $F_j$ in the diagram also has an $s$-isogenous curve $E_i'$ and $F_j'$, and we have another $2$-isogeny diagram.
\begin{center}
    \begin{tikzcd}
    E_0' \ar[r,dash] & E_1' \ar[r,dash]  \ar[d, dash] & E_2' \ar[d, dash]  \ar[r, dash] &\ldots  \ar[r, dash] & E_{m+r-1}' \ar[d, dash] \ar[r, dash] & E_{m+r}' \\
    & F_1' & F_2' & & F_{m+r-1}' &
    \end{tikzcd}
\end{center}
By the assumption, $E_0$ is isogenous to $E_0^\sigma$ by an isogeny of degree $n$ whose kernel intersected with the kernel of the $2^{m-1}n$-isogeny $E_0 \to E_{m+r-1}'$ has cardinality $n/2^k$.

From here, it follows that $E_0^\sigma$ is isomorphic to $F_{m+r-k-1}'$. Using the same reasoning, $E_1^\sigma$ is isomorphic to $F_{m+r-k}'$. But then $F_{m+r-k-1}$ and $F_{m+r-k}$ are $2$-isogenous and the $2$-isogeny graph is not a tree, which is a contradiction. Hence, every elliptic curve $E/K$ with a $2^mn$-isogeny is a CM curve. 

\end{proof}

\section{Tables} \label{sec:tables}
We will list the results and all the quadratic points below. We list quadratic points up to conjugation, i.e., we list only one point per Galois conjugacy class.

\begin{table}[H]
    \caption{All non-cuspidal quadratic points on \boldmath $X_0(66)$}
    \label{table66}
    \begin{flalign*}
	& \text{Genus: } 9 \\
        &  n=22 \\
        &  d=11 \\
        &  k=1: \ X_0(66)/w_{11} \text{ genus } 2, \text{ rank } 0\\
        & k=3:\ X_0^+(99) \text{ by \Cref{thm:momose}}.
    \end{flalign*}
    {\small
    \begin{tabular}{ccccc}
	\toprule
	Point & Field & $j$-invariant & CM & multiplicity\\ [0.3ex]
	\midrule
        $P_1$ & $\Q\left(\sqrt{-2}\right)$  & $8000$ & $-8$ & $2$ \\ [0.5ex]
        \bottomrule
  \end{tabular}
  } \\
\end{table}

\begin{table}[H]
    \caption{All non-cuspidal quadratic points on \boldmath $X_0(70)$}
    \label{table70}
    \begin{flalign*}
	& \text{Genus: } 9 \\
        &  n=35 \\
        &  d=35 \\
        &  k=1: \ X_0(70)/w_{35} \text{ genus } 2, \text{ rank } 0\\
        & k=2:\ X_0^+(140) \text{ by \cite[Proposition 2.1. (a)]{Momose87} } .\\
        & \textbf{No quadratic non-cuspidal points}
        \end{flalign*}
\end{table}

\begin{table}[H]
    \caption{All non-cuspidal quadratic points on \boldmath $X_0(78)$}
    \label{table78}
    \begin{flalign*}
	& \text{Genus: } 11 \\
        &  n=39 \\
        &  d=39 \\
        &  k=1: \ X_0(78)/w_{39} \text{ genus } 2, \text{ rank } 0\\
        & k=2:\ X_0^+(156) \text{ by \Cref{thm:momose}}.
    \end{flalign*}
    {\small
    \begin{tabular}{ccccc}
	\toprule
	Point & Field & $j$-invariant & CM & multiplicity\\ [0.3ex]
	\midrule
        $P_1$ & $\Q\left(\sqrt{-3}\right)$  & $0$ & $-3$ & $1 $\\ [0.5ex]
        $P_2$ & $\Q\left(\sqrt{-3}\right)$  & $54000$ & $-12$ & $1 $\\
        \bottomrule
  \end{tabular}
  } \\
\end{table}

\begin{table}[H]
    \caption{All non-cuspidal quadratic points on \boldmath $X_0(82)$}
    \label{table82}
    \begin{flalign*}
	& \text{Genus: } 9 \\
        &  n=41 \\
        &  d=41 \\
        &  k=1: \ X_0(82)/w_{41} \text{ genus } 3, \text{ rank } 1\\
        & k=2:\ X_0^+(164) \text{ by \Cref{thm:momose}}.
    \end{flalign*}
    {\small
    \begin{tabular}{ccccc}
	\toprule
	Point & Field & $j$-invariant & CM & multiplicity \\ [0.3ex]
	\midrule
        $P_1$ & $\Q\left(\sqrt{-1}\right)$  & $1728$ & $-4$ & $2 $\\ [0.5ex]
        $P_2$ & $\Q\left(\sqrt{-1}\right)$  & $287496$ & $-16$ & $ 1$ \\ [0.5ex]
        $P_3$ & $\Q\left(\sqrt{-2}\right)$  & $8000$ & $-8$ & $1$ \\
        \bottomrule
  \end{tabular}
  } \\
\end{table}

\begin{table}[H]
    \caption{All non-cuspidal quadratic points on \boldmath $X_0(84)$}
    \label{table84}
    \begin{flalign*}
	& \text{Genus: } 11 \\
        & X_0(42) \text { has finitely many quadratic points} \\
    \end{flalign*}
    {\small
    \begin{tabular}{ccccc}
	\toprule
	Point & Field & $j$-invariant & CM & multiplicity \\ [0.3ex]
	\midrule
        $P_1$ & $\Q\left(\sqrt{-3}\right)$  & $54000$ & $-12$ & $2$\\
        \bottomrule
  \end{tabular}
  } \\
\end{table}

\begin{table}[H]
    \caption{All non-cuspidal quadratic points on \boldmath $X_0(86)$}
    \label{table86}
    \begin{flalign*}
	& \text{Genus: } 10 \\
        &  n=43 \\
        &  d=43 \\
        &  k=1: \ X_0(86)/w_{43} \text{ genus } 4, \text{ rank } 2\\
        & k=3:\ X_0^+(172) \text{ by \Cref{thm:momose}}.\\
    \end{flalign*}
    {\small
    \begin{tabular}{ccccc}
	\toprule
	Point & Field & $j$-invariant & CM & multiplicity \\ [0.3ex]
	\midrule
        $P_1$ & $\Q(\sqrt{-3})$  & $0$ & $-3$ & $1$\\
        [0.5ex]
        $P_2$ & $\Q(\sqrt{-3})$  & $54000$ & $-3$ & $1$\\
        [0.5ex]
        $P_3$ & $\Q(\sqrt{-7})$  & $-3375$ & $-7$ & $3$\\
        [0.5ex]
        $P_4$ & $\Q(\sqrt{-7})$  & $16581375$ & $-28$ & $1$\\
        [0.5ex]
        $P_5$ & $\Q(\sqrt{-2})$  & $8000$ & $-8$ & $1$\\
        \bottomrule
  \end{tabular}
  } \\
\end{table}

%TODO: rational points!

\begin{table}[H]
    \caption{All non-cuspidal quadratic points on \boldmath $X_0(87)$}
    \label{table87}
    \begin{flalign*}
	& \text{Genus: } 9 \\
        &  n=29 \\
        &  d=29 \\
        &  k=1: \ X_0(87)/w_{29} \text{ genus } 2, \text{ rank } 0\\
        & k=3:\ X_0^+(261) \text{ by \Cref{thm:momose}}.\\
        & \textbf{No quadratic non-cuspidal points}
        \end{flalign*}
\end{table}

\begin{table}[H]
    \caption{All non-cuspidal quadratic points on \boldmath $X_0(88)$}
    \label{table88}
    \begin{flalign*}
	& \text{Genus: } 9 \\
        & X_0(44) \text { has finitely many quadratic points} \\
    \end{flalign*}
    {\small
    \begin{tabular}{ccccc}
	\toprule
	Point & Field & $j$-invariant & CM & multiplicity\\ [0.3ex]
	\midrule
        $P_1$ & $\Q\left(\sqrt{-7}\right)$  & $-3375$ & $-7$ & $4$\\
        [0.5ex]
        $P_2$ & $\Q\left(\sqrt{-7}\right)$  & $16581375$ & $-28$ & $2$\\
        \bottomrule
  \end{tabular}
  } \\
\end{table}

\begin{table}[H]
    \caption{All non-cuspidal quadratic points on \boldmath $X_0(90)$}
    \label{table90}
    \begin{flalign*}
	& \text{Genus: } 11 \\
        & X_0(45) \text { has finitely many quadratic points}\\
        & \textbf{No quadratic non-cuspidal points}
    \end{flalign*}
\end{table}
\begin{table}[H]
    \caption{All non-cuspidal quadratic points on \boldmath $X_0(96)$}
    \label{table96}
    \begin{flalign*}
	& \text{Genus: } 9 \\
        & \text {See \Cref{sec:96}}\\
        & \textbf{No quadratic non-cuspidal points}
    \end{flalign*}

\end{table}

\begin{table}[H]
    \caption{All non-cuspidal quadratic points on \boldmath $X_0(99)$}
    \label{table99}
    \begin{flalign*}
	& \text{Genus: } 9 \\
        &  n=33 \\
        &  d=11 \\
        &  k=1: \ X_0(99)/w_{11} \text{ genus } 3, \text{ rank } 1\\
        & k=3:\ X_0^+(99) \text{ by \Cref{thm:momose}}.(\Q).\\
        \end{flalign*}
        {\small
    \begin{tabular}{ccccc}
	\toprule
	Point & Field & $j$-invariant & CM & multiplicity\\ [0.3ex]
	\midrule
        $P_1$ & $\Q\left(\sqrt{-2}\right)$  & $8000$ & $-8$ & $2$\\
        [0.5ex]
        $P_2$ & $\Q\left(\sqrt{-11}\right)$  & $-32678$ & $-11$ & $1$\\
        [0.5ex]
        $P_3$ & $\Q\left(\sqrt{33}\right)$  & $ 3274057859072\sqrt{33} - 18808030478336$ & $-99$ & $4$\\
        \bottomrule
  \end{tabular}
  } \\
\end{table}

\newpage

\bibliographystyle{siam}
\bibliography{bibliography1}

\def\cprime{$'$} \def\cprime{$'$}
\begin{thebibliography}{10}

\bibitem{AKNOV24}
{\sc N.~Ad{\v{z}}aga, T.~Keller, P.~Michaud-Jacobs, F.~Najman, E.~Ozman, and B.~Vukorepa}, {\em Computing quadratic points on modular curves {{\(X_0(N)\)}}}, Math. Comput., 93 (2024), pp.~1371--1397.

\bibitem{Balakrishnan}
{\sc J.~Balakrishnan, N.~Dogra, J.~S. M\"{u}ller, J.~Tuitman, and J.~Vonk}, {\em Explicit {C}habauty-{K}im for the split {C}artan modular curve of level 13}, Ann. of Math. (2), 189 (2019), pp.~885--944.

\bibitem{BDMTV23}
{\sc J.~S. Balakrishnan, N.~Dogra, J.~S. M{\"u}ller, J.~Tuitman, and J.~Vonk}, {\em Quadratic {Chabauty} for modular curves: algorithms and examples}, Compos. Math., 159 (2023), pp.~1111--1152.

\bibitem{BanwaitNajmanPadurariu24}
{\sc B.~S. Banwait, F.~Najman, and O.~Padurariu}, {\em Cyclic isogenies of elliptic curves over fixed quadratic fields}, Math. Comput., 93 (2024), pp.~841--862.

\bibitem{Box19}
{\sc J.~{Box}}, {\em {Quadratic points on modular curves with infinite Mordell-Weil group}}, {Math. Comp.}, 90 (2021), pp.~321--343.

\bibitem{BruinNajman15}
{\sc P.~Bruin and F.~Najman}, {\em Hyperelliptic modular curves {$X_0(n)$} and isogenies of elliptic curves over quadratic fields}, LMS J. Comput. Math., 18 (2015), pp.~578--602.

\bibitem{bruin_najman2016}
{\sc P.~{Bruin} and F.~{Najman}}, {\em {A criterion to rule out torsion groups for elliptic curves over number fields}}, {Res. Number Theory}, 2 (2016), p.~13.
\newblock Id/No 3.

\bibitem{CGPS22}
{\sc P.~L. Clark, T.~Genao, P.~Pollack, and F.~Saia}, {\em The least degree of a {CM} point on a modular curve}, J. Lond. Math. Soc., II. Ser., 105 (2022), pp.~825--883.

\bibitem{CremonaNajmanQCurve}
{\sc J.~E. {Cremona} and F.~{Najman}}, {\em {\(\mathbb{Q}\)-curves over odd degree number fields}}, {Res. Number Theory}, 7 (2021), p.~30.
\newblock Id/No 62.

\bibitem{Deg3Class}
{\sc M.~{Derickx}, A.~{Etropolski}, M.~{van Hoeij}, J.~S. {Morrow}, and D.~{Zureick-Brown}}, {\em {Sporadic cubic torsion}}, {Algebra Number Theory}, 15 (2021), pp.~1837--1864.

\bibitem{DN_sporadic}
{\sc M.~Derickx and F.~Najman}, {\em Sporadic ponts on {$X_0(N)$}}.
\newblock preprint.

\bibitem{DerickxNajmanDeg4}
\leavevmode\vrule height 2pt depth -1.6pt width 23pt, {\em Classification of torsion of elliptic curves over quartic fields}.
\newblock preprint, 2024.

\bibitem{Elkies2004}
{\sc N.~D. {Elkies}}, {\em {On elliptic \(K\)-curves}}, in Modular curves and Abelian varieties. Based on lectures of the conference, Bellaterra, Barcelona, July 15--18, 2002, Basel: Birkh\"auser, 2004, pp.~81--91.

\bibitem{kamienny92}
{\sc S.~Kamienny}, {\em Torsion points on elliptic curves and {$q$}-coefficients of modular forms}, Invent. Math., 109 (1992), pp.~221--229.

\bibitem{kenku1981}
{\sc M.~A. {Kenku}}, {\em {On the modular curves \(X_0(125)\), \(X_1(25)\) and \(X_1(49)\)}}, {J. London Math. Soc. (2)}, 23 (1981), pp.~415--427.

\bibitem{KM88}
{\sc M.~A. Kenku and F.~Momose}, {\em Torsion points on elliptic curves defined over quadratic fields}, Nagoya Math. J., 109 (1988), pp.~125--149.

\bibitem{mazur77}
{\sc B.~Mazur}, {\em Modular curves and the {E}isenstein ideal}, Inst. Hautes \'Etudes Sci. Publ. Math.,  (1977), pp.~33--186 (1978).

\bibitem{mazur78}
\leavevmode\vrule height 2pt depth -1.6pt width 23pt, {\em Rational isogenies of prime degree (with an appendix by {D}. {G}oldfeld)}, Invent. Math., 44 (1978), pp.~129--162.

\bibitem{Momose1987}
{\sc F.~Momose}, {\em {Rational points on the modular curves \(X^ +_ 0(N)\)}}, {J. Math. Soc. Japan}, 39 (1987), pp.~269--286.

\bibitem{Momose87}
\leavevmode\vrule height 2pt depth -1.6pt width 23pt, {\em Rational points on the modular curves {{\(X^ +_ 0(N)\)}}}, J. Math. Soc. Japan, 39 (1987), pp.~269--286.

\bibitem{NajmanVukorepa23}
{\sc F.~Najman and B.~Vukorepa}, {\em Quadratic points on bielliptic modular curves}, Math. Comput., 92 (2023), pp.~1791--1816.

\bibitem{OzmanSiksek19}
{\sc E.~Ozman and S.~Siksek}, {\em Quadratic points on modular curves}, Math. Comp., 88 (2019), pp.~2461--2484.

\end{thebibliography}
\end{document}